
\documentclass[11pt]{article}
\usepackage[utf8]{inputenc}

\usepackage{geometry} 
\geometry{a4paper}
\usepackage{graphicx} 

\usepackage{tabularx}
\newcolumntype{Z}{>{\centering\let\newline\\\arraybackslash\hspace{0pt}}X}
\usepackage{ulem}
\usepackage{booktabs} 
\usepackage{array} 
\usepackage{paralist} 
\usepackage{verbatim} 
\usepackage{subfig} 
\usepackage{hyperref}
\usepackage{amsmath}
\usepackage{amssymb}
\usepackage{authblk}
\usepackage{stmaryrd}
\usepackage{enumitem}
\setlist{nosep}
\usepackage{bbm}
\usepackage{mathtools}
\usepackage{textcomp}
\usepackage{xcolor}
\usepackage{listings}
\usepackage{float}
\usepackage{centernot}
\usepackage[mathscr]{euscript}
\usepackage{tikz-cd}
\usepackage{rotating}

\newcommand{\Ccal}{\mathcal{C}}

\newcommand{\Setswith}[1]{\PSh(#1)}

\newcommand{\Hom}{\mathrm{Hom}}

\newcommand{\op}{^{\mathrm{op}}}

\newcommand{\too}{\twoheadrightarrow}

\DeclareMathOperator{\im}{im}

\DeclareMathOperator{\PSh}{\mathbf{PSh}}

\DeclareMathOperator{\End}{End}



\tikzset{
  no line/.style={draw=none,
    commutative diagrams/every label/.append style={/tikz/auto=false}},
  from/.style args={#1 to #2}{to path={(#1)--(#2)\tikztonodes}}
	}
\tikzset{symbol/.style={draw=none, every to/.append style={edge node = {node [sloped, allow upside down, auto=false] {$#1$}}}}}

\usepackage{amsthm}
\newtheorem{thm}{Theorem}[section]

\newtheorem{proposition}[thm]{Proposition}
\newtheorem{lemma}[thm]{Lemma}

\theoremstyle{definition}

\newtheorem{definition}[thm]{Definition}

\newtheorem{example}[thm]{Example}
\newtheorem{problem}[thm]{Problem}
\theoremstyle{remark}

\makeatletter
\usepackage{tikz}
\newcommand*\circled[2][1.6]{\tikz[baseline=(char.base)]{
    \node[shape=circle, draw, inner sep=1pt, 
        minimum height={\f@size*#1},] (char) {\vphantom{WAH1g}#2};}}
\makeatother

\usepackage{fancyhdr} 
\pagestyle{fancy} 
\lhead{}\chead{}\rhead{}
\lfoot{}\cfoot{\thepage}\rfoot{}

\usepackage{sectsty}
\allsectionsfont{\sffamily\mdseries\upshape} 


\usepackage[nottoc,notlof,notlot]{tocbibind} 
\usepackage[titles,subfigure]{tocloft} 


\usepackage[nodayofweek]{datetime}
\longdate

\let\theta\vartheta
\let\emph\textit

\date{\vspace{-2em}}

\title{Solution to a problem by FitzGerald}
\author{Jens Hemelaer \thanks{Department of Mathematics, University of Antwerp, 
 Middelheimlaan 1, B-2020 Antwerp (Belgium) \\ email: jens.hemelaer@uantwerpen.be} \\ Morgan Rogers \thanks{Universit\`a degli Studi dell{'}Insubria, Via Valleggio n. 11, 22100 Como CO \\ Marie Sklodowska-Curie fellow of the Istituto Nazionale di Alta Matematica \\ email: mrogers@uninsubria.it}}
 
 \AtBeginDocument{%
    \def\MR#1{}
 }

\begin{document}

\maketitle

\begin{abstract}
FitzGerald identified four conditions (RI), (UR), (RI*) and (UR*) that are necessarily satisfied by an algebra, if its monoid of endomorphisms has commuting idempotents. We show that these conditions are not sufficient, by giving an example of an algebra satisfying the four properties, such that its monoid of endomorphisms does not have commuting idempotents. This settles a problem presented by Fitzgerald at the Conference and Workshop on General Algebra and Its Applications in 2013 and more recently at the workshop NCS 2018. After giving the counterexample, we show that the properties (UR), (RI*) and (UR*) depend only on the monoid of endomorphisms of the algebra, and that the counterexample we gave is in some sense the easiest possible. Finally, we list some categories in which FitzGerald's question has an affirmative answer.
\end{abstract}


\section{Introduction}

In universal algebra, an important invariant of an algebra is its monoid of endomorphisms. There are various research problems concerning these endomorphism monoids. For example, given a variety of algebras, which monoids appear as endomorphism monoids of algebras in this variety? And what properties of an algebra can we deduce from properties of its endomorphism monoid, and vice versa?

We refer to \cite{adams-bulman-fleming} for an overview of known results in this direction. We highlight some results. Each monoid is the endomorphism monoid of some directed graph. This was shown by Hedrl\'in and Pultr, first for finitely generated monoids \cite{hedrlin-pultr}, and then for monoids with cardinality strictly smaller than the first inaccessible cardinal in \cite{hedrlin-pultr-2}, and eventually for all monoids in \cite{hedrlin-pultr-vopenka} together with Vop\v{e}nka. In fact, they show that every small category has a full embedding in the category of directed graphs. Similarly, every variety of algebras has a full embedding in the category of directed graphs, see \cite{hedrlin-pultr-embeddings}. It was shown by Hedrl\'in and Ku\v{c}era that more generally any concrete category can be fully embedded in a variety of algebras, when working in von Neumann--Bernays--G\"odel set theory (including the axiom of global choice), under the additional axiom that there are no measurable cardinals, see \cite{hedrlin-icm}. This motivates the following definition: a category is called \emph{(algebraically) universal} if it contains a full subcategory equivalent to the category of directed graphs. Examples of algebraically universal categories are the category of rings and the category of semigroups, see \cite[Chapter V]{pultr-trnkova}.

At the other end of the spectrum, there are categories of algebras such that the algebra can be uniquely recovered from its endomorphism monoid. This is the case for e.g.\ boolean algebras \cite{schein}, torsion-free abelian groups \cite{puusemp-1} \cite{puusemp-2} and symmetric groups \cite{puusemp-symmetric}.

In \cite{jackson} and later in \cite{leech-pita-costa}, FitzGerald shows that if the monoid of endomorphisms has commuting idempotents, then there are four conditions that are satisfied by the algebra, called (RI), (UR), (RI*) and (UR*). We will recall these properties in Section \ref{sec:background}. The question by FitzGerald was then whether these properties also imply that the monoid of endomorphisms has commuting endomorphisms. In Section \ref{sec:counterexample} we will show that this is not the case, by giving a counterexample. 

For each monoid $S$, there is a canonical algebra with $S$ as its monoid of endomorphisms, namely $S$ itself as a right $S$-set under multiplication. Our counterexample will be of this form. In Section \ref{sec:general}, we show why this is not a coincidence. More precisely, we prove that whenever the properties (RI), (UR), (RI*) and (UR*) hold for an algebra $A$ with endomorphism monoid $S$, then the four properties hold as well for $S$ as a right $S$-set under multiplication. So any counterexample to FitzGerald's problem, reduces in this way to a counterexample involving $S$ as a right $S$-set. Conversely, we show that if $S$ as a right $S$-set satisfies properties (UR), (RI*) and (UR*), then the same holds for all other algebras with $S$ as its monoid of endomorphisms. Finally, we illustrate with an example that if $S$ as a right $S$-set satisfies properties (RI), then it is not necessarily the case that $A$ satisfies (RI) as well.

In Section \ref{sec:affirmative-answer-examples}, we discuss some examples of categories in which FitzGerald's question does have an affirmative answer.

The first named author is a postdoctoral fellow of the Research Foundation -- Flanders (file number 1276521N). The second named author was supported in this work by INdAM and the Marie Sklodowska-Curie Actions as a part of the \textit{INdAM Doctoral Programme in Mathematics and/or Applications Cofunded by Marie Sklodowska-Curie Actions}.

\section{Background and statement of the problem}
\label{sec:background}

Recall that, in universal algebra, a \textbf{signature} or \textbf{language} is a set $\Sigma$ of \textbf{operations}, together with a map $\phi: \Sigma \to \mathbb{N}$ that associates to each operation an \textbf{arity}. The elements of $\phi^{-1}(n)$ are called the \textbf{$n$-ary operations} (they are called resp.\ unary, binary, ternary if $n=1,2,3$). An algebra over a signature $\Sigma$ is a set $A$, together with a function
\begin{equation*}
\mu_A : A^{\phi(\mu)} \to A
\end{equation*}
for each operation $\mu \in \Sigma$.
If $A$ and $B$ are two algebras over the signature $\Sigma$, then a homomorphism $f : A \to B$ is a function such that 
\begin{equation*}
f(\mu_A(a_1,\dots,a_n)) = \mu_B(f(a_1),\dots,f(a_n))
\end{equation*}
for each $n$-ary operation $\mu \in \Sigma$. In particular, if $M$ is a monoid, then we can consider the signature with a unary operation $\mu^{(m)}$ for each element $m \in M$. In this way, each right $M$-set $X$ is an algebra for this theory, with $\mu^{(m)}_X : X \to X$ defined by $\mu^{(m)}_X(x) = x \cdot m$. For two right $M$-sets $X$ and $Y$, the homomorphisms $X \to Y$ as algebras are precisely the morphisms as right $M$-sets.

A \textbf{variety of algebras} is the collection of all algebras over a fixed signature that satisfy some equational laws. For example, if $M$ is a monoid, the variety of right $M$-sets has signature as above, and equational laws
\begin{equation*}
\begin{split}
\mu^{(n)}(\mu^{(m)}(x)) = \mu^{(mn)}(x)
\end{split}
\end{equation*}
indexed by pairs $(m,n) \in M\times M$, as well as $\mu^{(1)}(x) = x$. Note that a homomorphism between algebras in a variety automatically preserves the equational laws, so only the signature matters. In the remainder of the paper, we will use the more convential notations $x\cdot m$ or $xm$, rather than $\mu^{(m)}(x)$.

In \cite{jackson} and more recently in \cite{leech-pita-costa}, D.~FitzGerald suggests the following problem.

\begin{problem} \label{problem:fitzgerald}
Let $A$ be an algebra and $S$ its monoid of endomorphisms. If $S$ has commuting idempotents then $A$ has the properties:
\begin{itemize}
\item[(RI)] The intersection of two retracts of $A$ is also a retract, which is to say that if $A_1 \hookrightarrow A$ and $A_2 \hookrightarrow A$ admit retractions, then so does the inclusion of their pullback $A_3$:
\[\begin{tikzcd}
A_3 \ar[r, hook] \ar[dr, phantom, "\lrcorner", very near start] \ar[d, hook] & A_2 \ar[d, hook]  \\
A_1 \ar[r, hook] & A .
\end{tikzcd}\]
\item[(UR)] To each retract of $A$ corresponds a unique idempotent with that image (equivalently, each retract $R \hookrightarrow A$ has a unique left inverse $A \too R$).
\end{itemize}
and their duals:
\begin{itemize}
\item[(RI*)] The join of two coretracts of A is also a coretract, which is to say that if $A \too A_1$ and $A \too A_2$ admit sections, then so does their pushout $A_3$:
\[\begin{tikzcd}
A \ar[r, two heads] \ar[d, two heads] & A_2 \ar[d, two heads] \\
A_1 \ar[r, two heads] & A_3 \ar[ul, phantom, "\ulcorner", very near start] .
\end{tikzcd}\]
\item[(UR*)] Each coretract $A \too R$ has a unique section (right inverse) $R \hookrightarrow A$.
\end{itemize}
\textbf{If (RI), (UR) and their duals hold, is it true that $S$ has commuting idempotents?}
\end{problem}

In the next section, we will resolve the question by providing a counterexample. In Sections \ref{sec:general} and \ref{sec:affirmative-answer-examples}, we discuss some positive results. We end this section by providing some background regarding the properties (RI) and (RI*).

For an algebra $A$ in a category of algebras $\mathcal{C}$, consider triples $(D,m,n)$ for some object $D$ in $\mathcal{C}$ and $m,n : D \to A$ injective homomorphisms (or equivalently, monomorphisms) in $\mathcal{C}$. We say that two triples $(D,m,n)$ and $(D',m',n')$ are equivalent if there is an isomorphism $u : D' \to D$ such that $mu = m'$ and $nu = n'$. The equivalence class of a triple $(D,m,n)$ is denoted by $[m,n]$. The equivalence classes $[m,n]$ form an inverse semigroup $\mathcal{I}_A$, see \cite[Section 2]{fitzgerald}. The composition $[m,n]\cdot[k,l]$ is given by $[mp,lq]$, where
\begin{equation*}
\begin{tikzcd}
D \ar[r,"{n}"] & A \\
F \ar[r,"{q}"'] \ar[u,"{p}"] & E \ar[u,"{k}"']
\end{tikzcd}
\end{equation*}
is a pullback diagram. If $A$ satisfies (RI), then the elements $[m,n]$ where both $m$ and $n$ have a left inverse form a submonoid of $\mathcal{I}_A$, called the \emph{monoid of partial automorphisms between retracts}. If $A$ satisfies (RI*), then there is dually a \emph{monoid of partial automorphisms between coretracts}. This shows how the properties (RI) and (RI*) arise naturally in the study of partial automorphisms. For more on this, we refer to \cite{fitzgerald}.

\section{Counterexample to the problem}
\label{sec:counterexample}

Take the monoid $S$ defined by generators and relations as follows:
\begin{equation*}
S = \langle{e,f,g : e^2 = e,~f^2=f,~g^2=g,~fg=gf=eg=ge=g, fef=g=efe}\rangle
\end{equation*}
Note that $S$ has 6 elements $1,e,f,g,ef,fe$. In particular, $e$ and $f$ do not commute, and $ef$ and $fe$ are not idempotent.

Consider the category of $S$-sets, viewed as a variety of algebras via the construction described earlier. Let $A$ be the canonical $S$-set, with underlying set $S$ and for each $s \in S$ the $1$-ary operation $\mu^{(s)}$ given by right multiplication by $s$. The monoid of algebra endomorphisms of $A$ can then be identified with the monoid $S$. We shall show that the four properties (RI), (UR), (RI*) and (UR*) hold. 

Consider a retract $r : A \too R$. Then by definition, we can find an homomorphism $j : R \hookrightarrow A$ such that $r \circ j = 1_R$. Now $j \circ r$ is an idempotent of $S$. In particular, the image of $j \circ r$ is either $S$, $eS = \{e,ef,g\}$, $fS=\{f,fe,g\}$ or $gS =\{g\}$. These 4 are pairwise distinct. So for each retract, there is a unique idempotent that has this retract as its image. This shows (UR). Further, we can compute that the intersections of two retracts is again a retract, the only nontrivial case being $eS \cap fS = gS$. This shows (RI).

Now we determine the congruences $\rho$ on $A$ such that the quotient map $A \to A/\rho$ is a retraction (i.e.\ the \textbf{coretracts} in the terminology of \cite{leech-pita-costa}). There are four possibilities:
\begin{itemize}
\item $a~\rho_1~ b$ if and only if $a=b$.
\item $a~\rho_e~ b$ if and only if $a,b \in \{ 1,e \}$ or $a,b \in \{f,ef\}$ or $a,b \in \{ fe,g \}$
\item $a~\rho_f~ b$ if and only if $a,b \in \{ 1,f \}$ or $a,b \in \{e,fe\}$ or $a,b \in \{ef,g\}$
\item $a~\rho_g~ b$ for all $a,b \in A$.
\end{itemize}
So they each correspond to a unique idempotent. This shows (UR*). Further, we have to show that the join of two coretracts is again a coretract. The only nontrivial case is to show that $\rho_e \vee \rho_f = \rho_g$. But note that
\begin{equation*}
g~\rho_f~ef~\rho_e~f~\rho_f~1~\rho_e~e~\rho_f~fe
\end{equation*}
so $\rho_e \vee \rho_f$ identifies all 6 elements of $A$. This shows (RI*). We conclude:

\begin{proposition} \label{prop:counterexample-fitzgerald}
Consider the monoid
\begin{equation*}
S = \langle{e,f,g : e^2 = e,~f^2=f,~g^2=g,~fg=gf=eg=ge=g, fef=g=efe}\rangle.
\end{equation*}
For each $m \in S$, we have a $1$-ary operation on $A = S$ defined by multiplication on the right. Then the monoid of endomorphisms of $A$ (considered as algebra) is $S$. Further, $A$
satisfies properties (RI), (UR), (RI*), (UR*), but $S$ does not have commuting idempotents. This provides a counterexample to Problem \ref{problem:fitzgerald}. 
\end{proposition}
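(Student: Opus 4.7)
The plan is to exhibit the monoid $S$ concretely, identify the endomorphism monoid of $A = S$ (as a right $S$-set) with $S$ itself, and then verify each of (RI), (UR), (RI*), (UR*) by a short case analysis, while observing directly that $e$ and $f$ are non-commuting idempotents.

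First, I would enumerate the elements of $S$. Using the relations, any word in $\{e,f,g\}$ can be rewritten in normal form: $e^2 = e$ and $f^2 = f$ collapse powers, the relations $eg = ge = fg = gf = g$ absorb $g$ into neighboring letters, and $efe = fef = g$ forces any alternating word of length $\geq 3$ in $e,f$ to reduce to $g$. This leaves only the six candidates $1, e, f, g, ef, fe$. To rule out further collapses I would construct an explicit faithful representation of $S$ on a small set, confirming the six elements are distinct and that $ef \neq fe$. The idempotents are then exactly $\{1,e,f,g\}$.

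Next, since $A = S$ as a right $S$-set, endomorphisms correspond to left multiplication by elements of $S$, so $\End(A) \cong S$. Retracts of $A$ correspond bijectively to images $\varepsilon S$ of idempotents $\varepsilon \in S$, and I would list these: $S$, $eS = \{e,ef,g\}$, $fS = \{f,fe,g\}$, $gS = \{g\}$. These four subsets are pairwise distinct, which yields (UR), since each retract is the image of a unique idempotent. For (RI), the only nontrivial intersection to verify is $eS \cap fS$; inspection gives $\{g\} = gS$, itself a retract.

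Dually, coretracts $A \to A/\rho$ correspond to the kernel congruences $\rho_\varepsilon$ of left multiplication by an idempotent $\varepsilon$. I would compute these explicitly: $\rho_1$ is the diagonal, $\rho_e$ partitions $A$ into $\{1,e\}, \{f,ef\}, \{fe,g\}$, $\rho_f$ into $\{1,f\}, \{e,fe\}, \{ef,g\}$, and $\rho_g$ is the total relation. These are pairwise distinct, proving (UR*). For (RI*), the only nontrivial join is $\rho_e \vee \rho_f$, and I would display an explicit zig-zag
\begin{equation*}
g \;\rho_f\; ef \;\rho_e\; f \;\rho_f\; 1 \;\rho_e\; e \;\rho_f\; fe
\end{equation*}
showing that the join identifies all six elements, hence coincides with $\rho_g$, which is a coretract.

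The main obstacle is simply the bookkeeping: proving that no further identifications are forced by the defining relations (so that $S$ really has six elements and $ef \neq fe$), and making sure the lists of idempotents of $S$ and of coretract-congruences on $A$ are complete. Once the six-element structure is pinned down, the remaining verifications are finite and mechanical, and the non-commutativity of the two idempotents $e,f$ in the resulting monoid $S = \End(A)$ delivers the counterexample to Problem \ref{problem:fitzgerald}.
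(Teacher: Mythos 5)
Your proposal is correct and follows essentially the same route as the paper: identify $\End(A)\cong S$, list the four retracts $S, eS, fS, gS$ and the four coretract congruences $\rho_1,\rho_e,\rho_f,\rho_g$ to get (UR), (UR*) and (RI), and use the same zig-zag $g\,\rho_f\,ef\,\rho_e\,f\,\rho_f\,1\,\rho_e\,e\,\rho_f\,fe$ for (RI*). The only difference is that you make explicit the verification that $S$ has exactly six elements (via a normal form and a faithful representation), which the paper simply asserts.
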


\section{More general algebras}
\label{sec:general}

Now a natural question is whether, for an algebra $A$, the properties (RI), (UR), (RI*) and (UR*) can be formulated purely in terms of the endomorphism monoid $S$ of $A$. We will show that this is the case for the properties (UR), (RI*), (UR*). To do this, we will need the concept of idempotent completions.

\begin{definition} \label{def:idempotent-completion}
Let $\mathcal{C}$ be a small category and let $\mathcal{D} \subseteq \mathcal{C}$ be a full subcategory. Then we say that $\mathcal{C}$ is an \textbf{idempotent completion} of $\mathcal{D}$ if:
\begin{enumerate}
\item for every object $C$ of $\mathcal{C}$ and morphism $e : C \to C$ such that $e\circ e = e$, there is an object $C'$ of $\mathcal{C}$ and morphisms 
\begin{equation*}
\begin{tikzcd}
C' \ar[r,"{i}"',shift right=1] & \ar[l,"{r}"',shift right=1] C
\end{tikzcd}
\end{equation*}
such that $r \circ i = 1_{C'}$ (so $C'$ is a retract of $C$) and $i \circ r = e$;
\item for every object $C$ of $\mathcal{C}$ there is an object $D$ of $\mathcal{D}$ such that $C$ is a retract of $D$.
\end{enumerate}
\end{definition}

Any small category has an idempotent completion, and idempotent completions are unique up to equivalence. Further, equivalent categories have equivalent idempotent completions.

Fix a variety of algebras, and let $\mathcal{C}$ be the category of algebras and algebra homomorphisms in this variety. Take $A$ in $\mathcal{C}$ and $\mathcal{C}_A \subseteq \mathcal{C}$ be the full subcategory consisting of the objects that can be written as a retract of $A$. Further, let $\mathcal{D}_A \subseteq \mathcal{C}_A$ be the full subcategory consisting of only $A$ itself. If we interpret monoids as categories with one object, then
\begin{equation*}
\mathcal{D}_A ~\simeq~ S
\end{equation*}
for $S$ the monoid of endomorphisms of $A$.

\begin{proposition} \label{prop:idempotent-completion}
The category of retracts $\mathcal{C}_A$ is an idempotent completion for $\mathcal{D}_A ~\simeq~ S$.
\end{proposition}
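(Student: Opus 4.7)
The plan is to check the two clauses of Definition \ref{def:idempotent-completion} directly. Clause (2) is immediate from the definition of $\mathcal{C}_A$: every object of $\mathcal{C}_A$ is, by construction, a retract of $A$, and $A$ is the unique object of $\mathcal{D}_A$. So the substance of the proof is clause (1), namely that every idempotent in $\mathcal{C}_A$ splits within $\mathcal{C}_A$. Note that $A \in \mathcal{C}_A$ (as $A$ is trivially a retract of itself), and that the full subcategory $\mathcal{D}_A$ with unique object $A$ has endomorphism monoid $S$, which justifies the equivalence $\mathcal{D}_A \simeq S$.

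The key ingredient is that idempotents split in any variety of algebras: given an idempotent endomorphism $\tilde e : A \to A$, the set $\{x \in A : \tilde e(x) = x\}$ is closed under all operations (because $\tilde e$ is a homomorphism), hence is a subalgebra, and $\tilde e$ factors as the surjection onto this subalgebra followed by the inclusion, giving a canonical splitting. I will use this in the following form. Let $C \in \mathcal{C}_A$ and $e : C \to C$ be idempotent. Pick a section/retraction pair $i_0 : C \hookrightarrow A$, $r_0 : A \twoheadrightarrow C$ with $r_0 i_0 = 1_C$. Then
\begin{equation*}
\tilde e := i_0 \circ e \circ r_0 : A \to A
\end{equation*}
is idempotent, so it splits through some $C'$ with maps $p : A \to C'$ and $j : C' \to A$ satisfying $pj = 1_{C'}$ and $jp = \tilde e$. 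Because $C'$ is a retract of $A$, it lies in $\mathcal{C}_A$.

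It then remains to produce an explicit splitting of $e$ through this $C'$. I will take $i := r_0 \circ j : C' \to C$ and $r := p \circ i_0 : C \to C'$. A direct calculation using the identities $r_0 i_0 = 1_C$, $pj = 1_{C'}$, and $jp = i_0 e r_0$ (together with the derived identities $p = p i_0 e r_0$ and $j = i_0 e r_0 j$ obtained by multiplying $pj = 1_{C'}$ on one side) yields both $ri = 1_{C'}$ and $ir = e$, as required.

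I do not expect any real obstacle: the argument is bookkeeping once one has noted that varieties of algebras are Karoubi (idempotent) complete. The only point deserving a sentence in the write-up is why the splitting of $\tilde{e}$ lives in $\mathcal{C}_A$ rather than merely in the ambient $\mathcal{C}$, which is resolved because the splitting exhibits $C'$ as a retract of $A$.
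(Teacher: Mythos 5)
Your proof is correct, and it rests on the same key fact as the paper's: in a variety of algebras every idempotent endomorphism splits, because its image (equivalently, its fixed-point set) is a subalgebra. The only difference is bookkeeping: the paper splits $e$ directly on $C$ through the image subalgebra $e(C)\subseteq C$ and then verifies $e(C)\in\mathcal{C}_A$ by composing the two splittings (the maps $\pi\circ r_0$ and $i_0\circ j$ in your notation), whereas you first conjugate $e$ to the idempotent $\tilde e = i_0 e r_0$ on $A$, split it there, and transport the splitting back via $i=r_0 j$, $r=p\, i_0$; your route makes membership of $C'$ in $\mathcal{C}_A$ automatic at the cost of the short identity-chase showing $ri=1_{C'}$ and $ir=e$, which does go through.
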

\begin{proof}
The second part of Definition \ref{def:idempotent-completion} holds trivially. For the first part, let $R$ be an object of $\mathcal{C}_A$. Then there are morphisms
\begin{equation*}
\begin{tikzcd}
R \ar[r,"{i}"',shift right=1] & \ar[l,"{r}"',shift right=1] A
\end{tikzcd}
\end{equation*} 
such that $r \circ i = 1_R$. Let $e : R \to R$ be an endomorphism of $R$, with $e \circ e$. Then $e$ factors as
\begin{equation*}
\begin{tikzcd}
R \ar[r,"{\pi}"] & e(R) \ar[r,hook,"{j}"] & R
\end{tikzcd}
\end{equation*}
with $e(R)$ the image of $e$, $\pi(x) = e(x)$ for all $x \in R$, and $j$ the inclusion map. It follows from $e \circ e = e$ that $\pi \circ j = 1_{e(R)}$. Further, by looking at the compositions $\pi \circ r$ and $i \circ j$, we see that $e(R)$ is again in $\mathcal{C}_A$ (i.e.\ a retract of $A$).
\end{proof}

Proposition \ref{prop:idempotent-completion} in particular shows that the idempotent completion (up to equivalence) depends only on the endomorphism monoid $S$ of $A$.

If we interpret $S$ as a category with one object, then there is a functor $\mathcal{A} : S \to \Ccal$ sending the unique object to $A$ and each $s \in S$ to the corresponding endomorphism of $A$. The category $\Ccal$ has all colimits, so from the universal property for categories of presheaves we know that there is an unique colimit-preserving functor $F$ making the following diagram commute.
\begin{equation}\label{eq:F}
\begin{tikzcd}
S \ar[r,"{\mathcal{A}}"] \ar[d,"{\mathbf{y}}"'] & \mathcal{C} \\
\Setswith{S} \ar[ru,dashed,"{F}"']
\end{tikzcd}
\end{equation}
This functor $F$ has a right adjoint
\begin{equation*}
G : \mathcal{C} \longrightarrow \Setswith{S}
\end{equation*}
given by $G(B) \simeq \Hom_{\mathcal{C}}(A,B)$ equipped with right $S$-action such that
\begin{equation*}
(f \cdot s)(a) = f(s(a))
\end{equation*}
for all $f \in \Hom_{\mathcal{C}}(A,B)$, $s \in S$, $a \in A$.

Recall that we write $\mathcal{C}_A$ for the full subcategory of $\mathcal{C}$ consisting of the objects that can be written as a retract of $A$. Let $\check{S}$ be the full subcategory of $\Setswith{S}$ consisting of the objects that can be written as a retract of $S$ (as right $S$-set under multiplication). By Proposition \ref{prop:idempotent-completion}, both $\Ccal_A$ and $\check{S}$ are idempotent completions of $S$, so $\check{S} \simeq \Ccal_A$. The objects of $\check{S}$ are the right $S$-sets of the form $eS$, for $e$ an idempotent of $S$. Because the functor $F$ extends $\mathcal{A}$, it sends $eS$ to the retract $e(A)$ of $A$. This means that $F$ and $G$ are quasi-inverse to each other, when restricted to $\check{S}$ and $\Ccal_A$.

We will now try to use the functors $F$ and $G$ to translate the properties (UR), (UR*), (RI), (RI*) from $\mathbf{PSh}(S)$ to $\Ccal$ or vice versa.

\begin{proposition} \label{prop:bridge-four-properties}
Let $\mathcal{C}$ be a category of algebras. Take $A$ in $\mathcal{C}$ and let $S$ be the monoid of endomorphisms of $A$.
\begin{itemize}
\item If one of the properties (RI), (UR), (RI*), (UR*) holds for $A$, then the same property holds for $S$ (as a right $S$-set).
\item If one of the properties (UR), (RI*), (UR*) holds for $S$ (as a right $S$-set), then the same property holds for $A$.
\end{itemize}
\end{proposition}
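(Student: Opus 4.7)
The plan is to exploit the adjunction $F \dashv G$ of diagram~\eqref{eq:F} together with the equivalence $F|_{\check{S}}\colon \check{S} \xrightarrow{\sim} \Ccal_A$ from Proposition~\ref{prop:idempotent-completion}. I will use throughout that $G$ preserves limits and retracts (as a right adjoint), that $F$ preserves colimits and coretracts (as a left adjoint), and that under the equivalence, a retract $eS$ of $S$ in $\PSh(S)$ corresponds to the retract $eA$ of $A$ in $\Ccal$, and dually for coretracts.

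Properties (UR) and (UR*) admit a direct monoid-theoretic reformulation that handles both implications simultaneously: for idempotents $e_1, e_2 \in S$, the equality $e_1 A = e_2 A$ of subobjects of $A$ is equivalent to $e_1 e_2 = e_2$ and $e_2 e_1 = e_1$, and exactly the same characterization controls $e_1 S = e_2 S$ in $\PSh(S)$; dually, the kernel congruences defining coretracts agree precisely when $e_1 e_2 = e_1$ and $e_2 e_1 = e_2$. Since these conditions depend only on the monoid $S$, (UR) and (UR*) for $A$ are logically equivalent to the same properties for $S$. For (RI) in the direction $A \Rightarrow S$, I take retracts $e_1 S, e_2 S$ of $S$, apply $F$ to obtain retracts $e_1 A, e_2 A$ of $A$, invoke (RI) to conclude that their pullback $P_\Ccal$ in $\Ccal$ lies in $\Ccal_A$, and then apply $G$ (which preserves pullbacks and retracts) to identify $G(P_\Ccal)$ both with the pullback of $e_1 S \to S \leftarrow e_2 S$ in $\PSh(S)$ and as a retract of $G(A) = S$. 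For (RI*) in the direction $S \Rightarrow A$, a symmetric argument uses $F$: the pushout of coretracts $S \twoheadrightarrow e_i S$ in $\PSh(S)$ lies in $\check{S}$ by hypothesis, and $F$ (which preserves pushouts and sends $\check{S}$ into $\Ccal_A$) sends it to the pushout in $\Ccal$, whose structure map $A \to P_\Ccal \cong \epsilon A$ then splits, giving a coretract of $A$.

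The subtlest case is (RI*) in the direction $A \Rightarrow S$, because $G$ does not preserve pushouts. Given coretracts $S \twoheadrightarrow e_i S$ with pushout $P_{\PSh(S)}$ in $\PSh(S)$, applying $F$ yields the pushout $P_\Ccal \cong \epsilon A$ in $\Ccal$, a coretract by (RI*) for $A$. The cocone identities force $\epsilon e_i = \epsilon$, which provides a canonical surjection $P_{\PSh(S)} \twoheadrightarrow \epsilon S$ induced by $s \mapsto \epsilon s$; the remaining task is to show that this surjection is an isomorphism, i.e.\ that the congruence defining the pushout in $\PSh(S)$ equals $\{(s,t): \epsilon s = \epsilon t\}$. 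My approach is to combine the fact that $G$ preserves the kernel pair of $A \twoheadrightarrow \epsilon A$ (yielding an identification between kernel-pair congruences on $A$ and on $S$) with the equation of congruences on $A$ supplied by (RI*), and to argue via the universal property of the pushout in $\PSh(S)$ that $\epsilon S$ satisfies this universal property (using that the diagram $S \rightrightarrows e_i S$ lies entirely in $\check{S}$, where the equivalence with $\Ccal_A$ exhibits $\epsilon S$ as a pushout). The main obstacle is precisely the comparison between pushouts in the full subcategory $\check{S}$ and in the ambient presheaf topos $\PSh(S)$: one must show that any congruence on $S$ containing the $\sim_{e_i}^S$ must also contain $\sim_\epsilon^S$, which is the step where the structural input of (RI*) on $A$---beyond just the existence of $\epsilon$---has to be carefully exploited.
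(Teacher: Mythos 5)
Most of your plan is sound and runs parallel to the paper's proof: your idempotent calculus for images and kernel congruences settles (UR) and (UR*) in both directions (the paper gets the same thing slightly more abstractly from the equivalence $G\colon \Ccal_A \to \check{S}$), and your use of $G$ preserving pullbacks and split monos for (RI) in the direction $A \Rightarrow S$, and of $F$ preserving pushouts for (RI*) in the direction $S \Rightarrow A$, is exactly the paper's argument.

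The genuine gap is in the direction (RI*) for $A$ implies (RI*) for $S$, which you correctly single out as the subtle case but do not actually prove. You reduce it to showing that the comparison surjection from the pushout $P$ in $\PSh(S)$ onto $\epsilon S$ is injective, i.e.\ that the congruence $\theta$ generated by the kernel congruences of $e_1\cdot$ and $e_2\cdot$ contains that of $\epsilon\cdot$; but at that point you only say this step ``has to be carefully exploited,'' and the route you gesture at (the universal property of the pushout in $\check{S}$, transported from $\Ccal_A$) cannot close it on its own: the quotient $P = S/\theta$ need not lie in $\check{S}$, so knowing the square is a pushout in the full subcategory gives no handle on maps into or out of $P$ --- this is precisely the obstacle you name, and naming it is not overcoming it. The paper's resolution is a concrete trick absent from your proposal: read the pushout square in $\check{S}$ as a pullback square in $\check{S}\op$, which is the idempotent completion of $S\op$ and embeds limit-preservingly into $\PSh(S\op)$ via Yoneda; this yields the identity of left ideals $Se_1 \cap Se_2 = S\epsilon$. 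Writing $x$ for the generator of the pushout $P$ (so that the cocone maps are $x\cdot$), commutativity gives $xe_1 = x = xe_2$, hence $x \in Se_1 \cap Se_2 = S\epsilon$ and therefore $x\epsilon = x$; this is exactly what is needed to define the map $\epsilon S \to P$ sending $\epsilon$ to $x$, inverse to your comparison surjection, so $P \cong \epsilon S$ and (RI*) holds for $S$. Without this left-ideal computation (or some equivalent explicit construction of the inverse map), your treatment of the hardest direction remains an announced strategy rather than a proof.
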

\begin{proof}
We first show that (UR) holds for $A$ if and only if it holds for $S$ (as right $S$-set). The property (UR) holds for $A$ if and only if every section $j : R \to A$ has a unique right inverse $h: A \to R$. Because $G : \Ccal_A \to \check{S}$ is an equivalence of categories, sending $A$ to $S$ (as a right $S$-set), this is equivalent to the property that in  $\check{S}$ every section $j : R' \to S$ has a unique right inverse, for arbitrary $R'$. This is in turn equivalent to $S$ (as right $S$-set) satisfying (UR). Analogously, we can show that $A$ satisfies (UR*) if and only if $S$ (as a right $S$-set) satisfies (UR*).

If $S$ (as a right $S$-set) satisfies property (RI*), then this means that for two idempotents $e,f \in S$ there is an idempotent $g \in S$ with $ge = gf = g$ such that
\begin{equation} \label{eq:pushout-in-check-S}
\begin{tikzcd}
S \ar[r,"{e \cdot}"] \ar[d,"{f \cdot}"'] & eS \ar[d,"{g \cdot}"] \\
fS \ar[r,"{g \cdot }"'] & gS
\end{tikzcd}
\end{equation}
is a pushout diagram. Applying $F$ gives
\begin{equation} \label{eq:pushout-in-C}
\begin{tikzcd}
A \ar[r,"{e}"] \ar[d,"{f}"'] & e(A) \ar[d,"{g}"] \\
f(A) \ar[r,"{g}"'] & g(A)
\end{tikzcd}
\end{equation}
and this is again a pushout diagram since $F$ preserves pushouts. So $A$ satisfies (RI*). The proof for the dual statement, i.e.\ if $A$ satisfies (RI) then $S$ satisfies (RI), is analogous.

Finally, we show that if $A$ satisfies (RI*), then $S$ (as a right $S$-set) satisfies (RI*). So suppose that $A$ satisfies (RI*), i.e.\ for two idempotents $e,f \in S$ there is a pushout diagram of the form (\ref{eq:pushout-in-C}) in $\Ccal$ for some idempotent $g \in S$. This is also a pushout in the full subcategory $\Ccal_A$. So after applying $G$, we get a pushout diagram of the form (\ref{eq:pushout-in-check-S}) in $\check{S}$. We claim that this is automatically also a pushout diagram in $\Setswith{S}$. We write down the pushout diagram in $\Setswith{S}$ as
\begin{equation} \label{eq:pushout-in-PSh-M}
\begin{tikzcd}
S \ar[r,"{e \cdot}"] \ar[d,"{f \cdot}"'] & eS \ar[d,"{x \cdot}"] \\
fS \ar[r,"{x \cdot }"'] & xS
\end{tikzcd}
\end{equation}
for some element $x \in S$ (the pushout is cyclic because pushouts of epimorphisms are again epimorphisms). It remains to show that there is an isomorphism $xS \to gS$ sending $x$ to $g$. By applying the universal property of the pushout to the diagram (\ref{eq:pushout-in-check-S}), we find a map $\phi : xS \to gS$ such that $\phi(x) = g$. In order to construct a map $\psi: gS \to xS$ with $\psi(g) = x$ it is enough to show that $xg = x$. Because (\ref{eq:pushout-in-check-S}) is a pushout in $\check{S}$, it determines a pullback diagram in the opposite category $\check{S}\op$:
\begin{equation*}
\begin{tikzcd}
S   & Se \ar[l,"{\cdot e}"'] \\
Sf \ar[u,"{\cdot f}"]   & Sg \ar[l,"{\cdot g}"] \ar[u,"{\cdot g}"'] 
\end{tikzcd}\qquad.
\end{equation*}
The Yoneda embedding preserves limits, so the above is a pullback diagram in $\Setswith{S\op}$, not only in $\check{S}\op$. Further, the morphisms in the diagram are precisely the inclusions, so we find that $Se \cap Sf = Sg$. From the commutativity of (\ref{eq:pushout-in-PSh-M}) it follows that $xe = x = xf$, so $x \in Se \cap Sf = Sg$. So $xg = x$, which means there is a morphism $\psi : gS \to xS$ sending $g$ to $x$. This is an inverse to the map $\phi$ defined above. We conclude that the diagram (\ref{eq:pushout-in-check-S}) is a pushout in $\Setswith{S}$ for arbitrary idempotents $e,f \in S$. In other words, $S$ (as a right $S$-set) satisfies (RI*).
\end{proof}

From the above proposition, we know that if an algebra $A$ is a counterexample to Problem \ref{problem:fitzgerald}, and $S$ is its monoid of endomorphisms, then $S$ (as a right $S$-set) is another counterexample. From this point of view, the counterexample that we gave in Proposition \ref{prop:counterexample-fitzgerald} is the easiest possible counterexample.

In universal algebra, the most interesting varieties of algebras seem to be the ones that combine unary and binary operations (groups, lattices, rings\dots). So a further question is: can we find a counterexample to Problem \ref{problem:fitzgerald} in a variety of algebras with more interesting unary and binary operations? One strategy would be to start with the monoid $S$ from Proposition \ref{prop:counterexample-fitzgerald} and then find an algebra $A$ in the variety with $S$ as monoid of endomorphisms. This is always possible if the category of algebras is universal. Since $S$ satisfies (RI), (UR), (RI*) and (UR*) as right $S$-set, the properties (UR), (RI*) and (UR*) are satisfied by $A$, see Proposition \ref{prop:bridge-four-properties}. So the difficulty is in showing that $A$ satisfies (RI).

\begin{lemma} \label{lmm:A-satisfies-RI}
Let $\mathcal{C}$ be a category of algebras. Take $A$ in $\mathcal{C}$ and let $S$ be the monoid of endomorphisms of $A$. Suppose that for every two idempotents $e,f \in S$ there is a natural number $n$ such that $(ef)^n = (fe)^n$. Then $A$ satisfies (RI).
\end{lemma}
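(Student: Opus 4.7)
The plan is to exhibit, for any idempotents $e, f \in S$, an explicit idempotent $g \in S$ whose image coincides with the pullback $e(A) \cap f(A)$, which in any variety of algebras is just the set-theoretic intersection (the forgetful functor preserves limits) and so is automatically a subalgebra. The natural candidate is $g := (ef)^n = (fe)^n$, where $n$ comes from the hypothesis.

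The two key observations are: (i) the identities $eg = g$ and $fg = g$ hold in $S$, immediately from $e^2 = e$ applied to $e \cdot (ef)^n$ and $f^2 = f$ applied to $f \cdot (fe)^n$; (ii) every element of $e(A) \cap f(A)$ is fixed by both $e$ and $f$, since the image of an idempotent endomorphism equals its fixed subalgebra, and hence is fixed by $(ef)^n = g$ as well. Observation (i) gives $g(a) \in e(A) \cap f(A)$ for every $a \in A$, so $g(A) \subseteq e(A) \cap f(A)$; observation (ii) gives the reverse inclusion and simultaneously shows that $g$ restricts to the identity on $e(A) \cap f(A)$.

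Putting these together, $g(g(a)) = g(a)$ for every $a \in A$ (since $g(a)$ is a fixed point of $g$), so $g^2 = g$ in $S = \End(A)$, and the retraction $g : A \to g(A) = e(A) \cap f(A)$ provides exactly the data required by (RI). There is no real obstacle here: the hypothesis is precisely calibrated to force $g$ to behave like a genuine projection onto $e(A) \cap f(A)$, and the argument is a streamlined version of the commuting case, in which $g$ could simply be taken to be $ef$ itself.
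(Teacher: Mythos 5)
Your proposal is correct and follows essentially the same route as the paper: set $g=(ef)^n=(fe)^n$, use the absorption identities $eg=g=fg$ to get $g(A)\subseteq e(A)\cap f(A)$, and use the fact that elements of $e(A)\cap f(A)$ are fixed by $e$ and $f$ (hence by $g$) to get the reverse inclusion, idempotency of $g$, and the retraction onto the intersection. The only cosmetic difference is that the paper deduces idempotency of $g$ algebraically from $eg=ge=g=fg=gf$ rather than from the fixed-point observation, which changes nothing of substance.
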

\begin{proof}
If $e,f \in S$ are two idempotents, take a natural number $n$ such that $(ef)^n=(fe)^n$. We will write $h = (ef)^n = (fe)^n$. We have $eh = he = h = fh = hf$. It follows that $h$ is an idempotent. We claim that $h(A) = e(A) \cap f(A)$.

For $a \in A$, $e(h(a)) = h(a) = f(h(a))$, so $h(a) \in e(A) \cap f(A)$. This shows $h(A) \subseteq e(A)\cap f(A)$. Conversely, suppose $x \in e(A) \cap f(A)$. Then $e(x) = x = f(x)$, and it follows that $h(x) = x$. So $x \in h(A)$ and this shows the other inclusion $e(A)\cap f(A) \subseteq h(A)$.
\end{proof}

This leads to a counterexample to Problem \ref{problem:fitzgerald} in each universal category of algebras.

\begin{proposition}
Let $\mathcal{C}$ be a category of algebras that is universal. Then there exists an $A$ in $\mathcal{C}$ that satisfies (RI), (RI*), (UR) and (UR*), and such that its monoid of endomorphism does not have commuting idempotents.
\end{proposition}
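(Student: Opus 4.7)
The plan is to combine the specific counterexample monoid $S$ from Proposition \ref{prop:counterexample-fitzgerald} with the universality hypothesis on $\mathcal{C}$, then verify the hypotheses of Lemma \ref{lmm:A-satisfies-RI} to obtain (RI). Concretely, viewing $S$ as a one-object category, universality of $\mathcal{C}$ provides a full and faithful functor from $S$ into $\mathcal{C}$; in particular there exists an object $A \in \mathcal{C}$ whose endomorphism monoid is (isomorphic to) $S$. Since $S$ as a right $S$-set satisfies the four properties (RI), (UR), (RI*), (UR*) by Proposition \ref{prop:counterexample-fitzgerald}, the second bullet of Proposition \ref{prop:bridge-four-properties} immediately transfers (UR), (RI*), (UR*) from $S$ to $A$.

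The remaining task --- and the only step that is not purely formal --- is to verify that $A$ satisfies (RI). This is precisely the situation that Lemma \ref{lmm:A-satisfies-RI} is designed for: it suffices to check that for every pair of idempotents $e', f' \in S$ there exists $n \in \mathbb{N}$ with $(e'f')^n = (f'e')^n$. The idempotents of $S$ are $1, e, f, g$, and the only nontrivial pair to consider is $\{e, f\}$, since pairs involving $1$ are trivial and pairs involving $g$ reduce via $ge = eg = gf = fg = g$. For $e, f$, using the defining relation $efe = g = fef$ together with $ge = eg = g$ and $gf = fg = g$, one computes
\begin{equation*}
(ef)^2 = (efe)f = gf = g \quad \text{and} \quad (fe)^2 = (fef)e = ge = g,
\end{equation*}
so $n = 2$ works. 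Lemma \ref{lmm:A-satisfies-RI} then gives (RI) for $A$.

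Finally, $S$ does not have commuting idempotents, since $ef \neq fe$ in $S$, so $A$ is the required counterexample. The only potentially delicate point in this argument is the invocation of universality to realize $S$ as an endomorphism monoid; however, this is built into the definition of a universal category of algebras (via the embedding of directed graphs and the classical Hedrl\'in--Pultr type results recalled in the introduction), so no further work is needed beyond citing it.
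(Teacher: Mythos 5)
Your proof is correct and follows exactly the same route as the paper: realize $S$ as an endomorphism monoid via universality, transfer (UR), (RI*), (UR*) through Proposition \ref{prop:bridge-four-properties}, and obtain (RI) from Lemma \ref{lmm:A-satisfies-RI}. The only difference is that you explicitly verify $(ef)^2 = (fe)^2 = g$, which the paper leaves to the reader; this is a welcome addition rather than a deviation.
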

\begin{proof}
Take an algebra $A$ in $\mathcal{C}$ with endomorphism monoid $S$, with $S$ the monoid from Proposition \ref{prop:counterexample-fitzgerald}. Such an algebra exists because $\mathcal{C}$ is universal. We know that $S$ as a right $S$-set satisfies (RI), (RI*), (UR) and (UR*). It follows from Proposition \ref{prop:bridge-four-properties} that $A$ satisfies (RI*), (UR) and (UR*). Further, this monoid $S$ satisfies the assumption of Lemma \ref{lmm:A-satisfies-RI}, so it follows that $A$ satisfies (RI) as well. But $S$ does not have commuting idempotents.
\end{proof}

In Proposition \ref{prop:bridge-four-properties}, we showed that the properties (RI*), (UR) and (UR*) depend only on the endomorphism monoid of an algebra. We now show that this is not the case for the property (RI).

\begin{example}
We give an example of an algebra $A$ with endomorphism monoid $S$ such that $S$ satisfies (RI) as right $S$-set, but $A$ does not satisfy (RI). 

Take $S = \langle{e,f,g : e^2=e,~f^2=f,~g^2=g,~ eg=ge=g=gf=fg}\rangle$. The only idempotents in $S$ are $1$, $e$, $f$ and $g$. Other than $1$ and $g$, every element can be written in a unique way as a product of $e$'s and $f$'s, with $e$ and $f$ alternating each other. It follows that $eS \cap fS = \{g\} = gS$ and as a result, $S$ satisfies (RI) as right $S$-set.

Let $A = S \cup \{h,g'\}$. Equip $A$ with the structure of a right $S$-set, where the action on $S$ is the canonical action by multiplication, and further $he = h = hf$, $g'e = g' = g'f$ and $hg = g' = g'g$. Further, we equip $A$ with additional operations:
\begin{itemize}
\item two $0$-ary operations (i.e.\ constants) corresponding to the element $g, g' \in A$;
\item a unary operation $X$ defined as
\begin{equation*}
X(a) = \begin{cases}
h \quad & \text{if } a \notin \{g,g'\} \\
g' \quad & \text{if } a \in \{g,g'\}.
\end{cases}
\end{equation*}
\end{itemize}
We claim that $A$ has endomorphism monoid $S$. We have an inclusion $S \subseteq \End(A)$, defined as follows. The element $e \in S$ acts by multiplication on the left on $S$ and trivially on $h$ and $g'$, and similarly, $f \in S$ acts by multiplication on the left on $S$ and trivially on $h$ and $g'$. The element $g \in S$ sends everything in $S$ to $g$, and sends both $h$ and $g'$ to $g'$. To show that these define endomorphisms of $A$, first note that they preserve the right $S$-action on $A$, and that they fix the two constants $g$ and $g'$. It remains to show that also the unary operation $X$ is preserved. For this, note that $a \in \{g,g'\}$ if and only if $e(a) \in \{g,g'\}$, so $e(X(a)) = X(a) = X(e(a))$. Similarly, $f(X(a)) = X(a) = X(f(a))$. Moreover, $X(g(a)) = g' = g(X(a))$. This shows $S \subseteq \End(A)$.

We now show the other inclusion $\End(A) \subseteq S$. First note that $A$ as an algebra is generated by the element $1 \in A$ (the two elements $1$ and $h$ are generators for the underlying right $S$-set, and further $X(1) = h$). So an endomorphism $\phi$ of $A$ is completely determined by the element $\phi(1)$. Because $g = \phi(g) = \phi(1)g$, we know that $\phi(1) \notin \{h,g'\}$. So $\phi(1) \in S \subseteq A$, but then $\phi \in S$.

Finally, we show that $A$ does not satisfy (RI). Because $S$ has idempotents $1$, $e$, $f$ and $g$, the four retracts of $A$ are $A$ itself, $e(A)$, $f(A)$ and $g(A) = \{g,g'\}$. We compute that $e(A) \cap f(A) = \{ g, h, g' \}$, which does not agree with one of the four possibilities. So $e(A) \cap f(A)$ is not a retract, in particular $A$ does not satisfy (RI).
\end{example}

We expect that other interesting properties of an algebra $A$, other than (RI*), (UR) and (UR*), can be described by only looking at the monoid of endomorphisms of $A$. If a mathematical problem only uses these kind of properties, there are two possible methods of attacking the problem:
\begin{enumerate}
\item We forget about the algebra $A$ and focus on the monoid of endomorphisms $S$.
\item We choose \emph{any} algebra $A'$ with monoid of endomorphisms $S$ and we assume (without loss of generality) that $A = A'$ is this particular algebra.
\end{enumerate}
The second one mirrors the strategy of `building bridges' in the sense of Caramello \cite{caramello-unification}.

\section{Categories for which FitzGerald's problem has an affirmative answer}
\label{sec:affirmative-answer-examples}

We know that there is a counterexample to FitzGerald's problem in any universal category of algebras, like the category of rings, the category of semigroups, or the category of right $\langle{x,y}\rangle$-sets, where $\langle{x,y}\rangle$ is the free monoid on two generators. 

We now list some categories for which the answer to FitzGerald's problem has an affirmative answer.

\begin{example}[The category of sets]
For any set $A$ and element $a \in A$, the singleton $\{a\} \subseteq A$ is a retract. If $A$ has two distinct elements $a,b \in A$, then $\{a\}\cap\{b\} = \varnothing$ is an intersection of retracts that is not a retract itself. So if $A$ satisfies (RI) then $A$ is empty or has a singleton. In each case, the endomorphism monoid is trivial, in particular it has commuting idempotents.
\end{example}

\begin{example}[The category of pointed sets]
Let $A$ be a pointed set. As soon as we have distinct elements $a,b,\ast \subseteq A$, where $\ast$ is the distinguished point, we can define idempotents $e$ and $f$ with
\begin{gather*}
e(a)=e(b)=a,~ e(x) = \ast ~\text{for } x \neq a,b. \\
f(a)=a,~ f(x)=\ast ~\text{for } x \neq a.
\end{gather*}
Since $e$ and $f$ have the same image, $\End(A)$ fails to have (UR). So if (UR) is satisfied, then $A$ has one or two elements. One can check that in these two cases, the endomorphism monoid has commutative idempotents.
\end{example}

\begin{example}[Any abelian category]
Let $\mathcal{C}$ be an abelian category, and take $A$ in $\mathcal{C}$ with endomorphism monoid $S$. Let $e,f \in S$ be two idempotents. There is a decomposition $A = e(A) \oplus (1-e)(A)$. Consider the idempotent $\phi \in S$ that fixes $e(A)$ and sends the elements of $a \in (1-e)(A)$ to $ef(a)$. Then $\im(\phi) = e(A)$, and because of property (UR) this forces $\phi = e$, in particular $\phi$ is zero when restricted to $(1-e)(A)$. So $ef(1-e)(x) = 0$ for all $x \in A$, which shows $ef-efe = 0$. Dually, consider the idempotent $\psi \in S$ that sends $(1-e)(A)$ to $0$ and that sends $a \in e(A)$ to $a + (1-e)f(a)$. Then $\ker(\psi) = (1-e)(A) = \ker(e)$, and because of property (UR*) this forces $\psi = e$. In follows that $(1-e)f(e(a))=0$ for all $a \in A$, which shows $fe - efe = 0$. Since we have both $ef = efe$ and $fe = efe$, we see that $ef = fe$.
\end{example}

\begin{example}[The category of $G$-sets, for $G$ a commutative group] Let $G$ be a commutative group. Let $A$ be a set with a left $G$-action. Let $S$ be the monoid of endomorphisms of $A$. Take an idempotent $e \in S$. For each orbit $A_i$ of $A$, $e(A_i)$ is again an orbit. Assume that $e$ is nontrivial, and take a component $A_i$ such that $e(A_i)\neq A_i$. For an element $g \in G$, define the idempotent morphism $\phi \in S$ as
\begin{equation*}
\phi(x) = \begin{cases}
g(e(x)) \quad & \text{if } x \in A_i \\
e(x) \quad & \text{if }x \notin A_i.
\end{cases}
\end{equation*}
Then $\im(\phi) = \im(e)$, so because of property (UR) we have that $\phi = e$. It follows that $g(e(a)) = e(a)$ for all $a \in A_i$. Since $g \in G$ was arbitrary, this shows that $e(A_i)$ is an orbit consisting of a fixed point $\ast$. If there are distinct fixed points $\ast$ and $\ast'$, then the intersection of the two retracts $\{\ast\}$ and $\{\ast'\}$ would not be a retract, so if $A$ satisfies (RI) then $\ast$ is the unique fixed point. We have now shown that every idempotent $e \in S$ satisfies $e(a) \in \{\ast,a\}$ for all $a \in A$. Now take two idempotents $e,f \in S$. Then
\begin{equation*}
ef(a) = \begin{cases}
a \quad & \text{if } x \in e(A) \cap f(A) \\
\ast \quad & \text{otherwise.}
\end{cases}
\end{equation*}
This shows $ef = fe$. 
\end{example}

\section*{Acknowledgments}

We would like to thank Des FitzGerald for explaining to us the background behind the problem, and for formulating the problem in the first place. 

\bibliographystyle{amsplainarxiv}
\bibliography{../monoidprops}

\providecommand{\bysame}{\leavevmode\hbox to3em{\hrulefill}\thinspace}
\providecommand{\MR}{\relax\ifhmode\unskip\space\fi MR }
\providecommand{\MRhref}[2]{%
  \href{http://www.ams.org/mathscinet-getitem?mr=#1}{#2}
}
\providecommand{\href}[2]{#2}
\begin{thebibliography}{10}

\bibitem{adams-bulman-fleming}
M.~E. Adams, S.~Bulman-Fleming, and M.~Gould, \emph{Endomorphism properties of
  algebraic structures}, Proceedings of the {T}ennessee {T}opology {C}onference
  ({N}ashville, {TN}, 1996), World Sci. Publ., River Edge, NJ, 1997, pp.~1--17.
  \MR{1607374}

\bibitem{caramello-unification}
O.~Caramello, \emph{The unification of mathematics via topos theory}, preprint
  (2010), \href {http://arxiv.org/abs/1006.3930} {\path{arXiv:1006.3930}}.

\bibitem{fitzgerald}
D.~G. FitzGerald, \emph{Representations of inverse monoids by partial
  automorphisms}, Semigroup Forum \textbf{61} (2000), no.~3, 357--362.
  \MR{1832312}

\bibitem{hedrlin-icm}
Z.~Hedrlin, \emph{Extensions of structrures and full embeddings of categories},
  Actes du {C}ongr\`es {I}nternational des {M}ath\'{e}maticiens ({N}ice, 1970),
  {T}ome 1, 1971, pp.~319--322. \MR{0419554}

\bibitem{hedrlin-pultr}
Z.~Hedrl\'{\i}n and A.~Pultr, \emph{Relations (graphs) with given finitely
  generated semigroups}, Monatsh. Math. \textbf{68} (1964), 213--217.
  \MR{168684}

\bibitem{hedrlin-pultr-embeddings}
\bysame, \emph{On full embeddings of categories of algebras}, Illinois J. Math.
  \textbf{10} (1966), 392--406. \MR{191858}

\bibitem{jackson}
M.~G. Jackson, \emph{General {A}lgebra and its {A}pplications 2013: problem
  session}, Algebra Universalis \textbf{74} (2015), no.~1-2, 9--16.
  \MR{3374655}

\bibitem{leech-pita-costa}
J.~Leech and J.~Pita~Costa, \emph{Open problems from {NCS} 2018}, {The Art of
  Discrete and Applied Mathematics} \textbf{2} (2019), no.~2, \#P2.09.

\bibitem{hedrlin-pultr-2}
A.~Pultr and Z.~Hedrl\'{\i}n, \emph{Relations (graphs) with given infinite
  semigroups}, Monatsh. Math. \textbf{68} (1964), 421--425. \MR{170841}

\bibitem{pultr-trnkova}
A.~Pultr and V.~Trnkov\'{a}, \emph{Combinatorial, algebraic and topological
  representations of groups, semigroups and categories}, North-Holland
  Mathematical Library, vol.~22, North-Holland Publishing Co., Amsterdam-New
  York, 1980. \MR{563525}

\bibitem{puusemp-2}
P.~Puusemp, \emph{On the determination of a torsion abelian group by its
  semigroup of endomorphisms}, Eesti NSV Tead. Akad. Toimetised
  F\"{u}\"{u}s.-Mat. \textbf{29} (1980), no.~3, 241--245, 345. \MR{590666}

\bibitem{puusemp-1}
\bysame, \emph{On the determination of a torsion abelian group by its semigroup
  of endomorphisms in the class of all periodic abelian groups}, Eesti NSV
  Tead. Akad. Toimetised F\"{u}\"{u}s.-Mat. \textbf{29} (1980), no.~3,
  246--253, 345. \MR{590667}

\bibitem{puusemp-symmetric}
\bysame, \emph{Semigroups of endomorphisms of symmetric groups}, Tartu Riikl.
  \"{U}l. Toimetised (1985), no.~700, 42--49. \MR{796290}

\bibitem{schein}
B.~M. Schein, \emph{Ordered sets, semilattices, distributive lattices and
  {B}oolean algebras with homomorphic endomorphism semigroups}, Fund. Math.
  \textbf{68} (1970), 31--50. \MR{272686}

\bibitem{hedrlin-pultr-vopenka}
P.~Vop\v{e}nka, A.~Pultr, and Z.~Hedrl\'{\i}n, \emph{A rigid relation exists on
  any set}, Comment. Math. Univ. Carolinae \textbf{6} (1965), 149--155.
  \MR{183647}

\end{thebibliography}

\end{document}